\begin{document}

\title{Popular difference sets}

\author{Tom Sanders}
\address{Department of Pure Mathematics and Mathematical Statistics\\
University of Cambridge\\
Wilberforce Road\\
Cambridge CB3 0WB\\
England } \email{t.sanders@dpmms.cam.ac.uk}

\begin{abstract}
We provide further explanation of the significance of an example in a recent paper of Wolf in the context of the problem of finding large subspaces in sumsets. 
\end{abstract}

\maketitle

\setcounter{section}{1}

\section*{}

Throughout these notes $G$ will denote the group $\F_2^n$, that is the $n$-dimensional vector space over $\F_2$, and $\P_G$ will denote the normalized counting measure on $G$. The convolution of two functions $f,g \in L^1(\P_G)$ is defined to be
\begin{equation*}
f \ast g (x):=\int{f(y)g(x-y)d\P_G(y)},
\end{equation*}
and is of particular importance in additive combinatorics. To see why recall that if $A,B \subset G$ then we write $A+B$ for the sumset $\{a+b:a \in A, b \in B\}$. Convolution then yields the following important identity
\begin{equation*}
A+B = \supp 1_A \ast 1_B.
\end{equation*}
The sumset $A+B$ can thus be studied through the convolution $1_A \ast 1_B$; a typical example of this is in the following problem.
\begin{problem}[Bourgain-Green, {\cite{bou::4,gre::0,gre::9}}]\label{pblm}
Suppose that $A \subset G:=\F_2^n$ has density $\Omega(1)$. How large a subspace $V$ with $V \subset A+A$ can we guarantee may be found?
\end{problem}
All known approaches to this problem proceed by harmonic analysis of $1_A \ast 1_A$. Since the arguments are analytic the methods are not good at distinguishing between when $1_A \ast 1_A$ is small and when it is very small. To be more precise we introduce a definition.

Given a set $A \subset G$ of density $\alpha>0$ and a parameter $c \in [0,1]$ we define the \emph{popular difference set} with parameter $c$ to be
\begin{equation*}
D_c(A):=\{x \in G: 1_A \ast 1_A(x)>c\alpha^2\}.
\end{equation*}
Our definition employs a slightly different normalization to those presented elsewhere so that $c$ naturally lies in the range $[0,1]$. Indeed, it is easy to see that if $c$ is greater than $1$ then even for large sets $A$ we may have $D_c(A)=\{0_G\}$.\footnote{ In particular this occurs generically: if $c>1$, $|G|$ is large enough in terms of $c$, and $A$ is chosen uniformly at random from sets of size $|G|/2$ then (with high probability) $D_c(A)=\{0_G\}$.}

At the other end of the spectrum we have $D_0(A)=A+A$. Now, the arguments for tackling Problem \ref{pblm} all fail to meaningfully distinguishing between the set $A+A=D_0(A)$ and $D_c(A)$ when $c$ is small. In particular, for example, Green effectively proves the following result in \cite{gre::9}.
\begin{theorem}\label{thm.subthm}
Suppose that $G:=\F_2^n$, $A \subset G$ has density $\alpha= \Omega(1)$ and $c \in [0,1]$ is a parameter. Then there is a subspace $V \subset D_c(A)$ with
\begin{equation*}
|V| \geq  \exp(\Omega((1-c) n)).
\end{equation*}
\end{theorem}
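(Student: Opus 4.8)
The plan is to analyse $g:=1_A\ast 1_A$ through its Fourier expansion. Writing $\wh{1_A}(t)=\int 1_A(x)(-1)^{t\cdot x}\,d\P_G(x)$, convolution diagonalises to $\wh g(t)=\wh{1_A}(t)^2\ge 0$, so $g$ is positive definite with $\wh g(0)=\alpha^2$ and $\sum_t\wh g(t)=g(0)=\alpha$. The goal is a large subspace $V$ on which $g>c\alpha^2$, and I would look for $V$ of the form $W^{\perp}$ for a subspace $W$ of the dual group: on $W^{\perp}$ every character $t\in W$ is trivial, so
\begin{equation*}
g(x)=\mu(W)+\sum_{t\notin W}\wh{1_A}(t)^2(-1)^{t\cdot x},\qquad \mu(W):=\sum_{t\in W}\wh{1_A}(t)^2\ge \alpha^2.
\end{equation*}
The first term is a guaranteed gain, so everything reduces to controlling the \emph{tail} sum while keeping $\dim V=n-\dim W$ proportional to $(1-c)n$.

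Bounding the tail by its total mass gives $g\ge 2\mu(W)-\alpha$ on $W^{\perp}$, which beats $c\alpha^2$ as soon as $\mu(W)>\tfrac12(\alpha+c\alpha^2)$, i.e. as soon as $W$ captures a fixed proportion of the spectral $\ell^2$-mass. This is fine when the spectrum is concentrated, but for a flat spectrum capturing that proportion forces $\dim W$ to within $O(1)$ of $n$ and yields only $\dim V=O(1)$; hence the absolute-value bound alone cannot give the theorem, and the \emph{cancellation} in the tail must be used. I would first invoke Chang's theorem to contain the large spectrum $\{t:|\wh{1_A}(t)|\ge\rho\alpha\}$ in a subspace $W_0$ with $\dim W_0=O(\rho^{-2}\log(1/\alpha))=O_\alpha(1)$, and pass to $V_0=W_0^{\perp}$, of codimension $O_\alpha(1)$. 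On $V_0$ every surviving frequency has $\wh{1_A}(t)^2<\rho^2\alpha^2$, so the tail has tiny coefficients and small energy $\sum_{t\notin W_0}\wh{1_A}(t)^4\le \rho^2\alpha^3$; a second- (or higher-) moment estimate would then show that the exceptional set $E=\{x\in V_0:g(x)\le c\alpha^2\}$ has density that is small in terms of $\rho$ and $(1-c)$.

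The difficulty is that sparseness of $E$ does \emph{not} hand us a large subspace inside $D_c(A)$: a generic set of small but fixed density is avoided only by subspaces of dimension $O(\log)$, and the moment estimates above give a per-point failure probability that is bounded below independently of $n$, so a naive union bound over the $2^{\dim V}$ points of a random subspace caps $\dim V$ at $O_\alpha(1)$. Extracting a subspace of dimension \emph{linear} in $n$ therefore cannot rest on the size of $E$ alone; it must exploit that the obstructions are few and structured --- precisely where the positivity of $\wh g$ and the quadratic dependence on $A$ enter --- for instance by iterating the spectral localisation, running a density increment along a chain of subspaces, or choosing $V\subseteq V_0$ adapted to the surviving low-frequency obstructions and controlling all $2^{\dim V}$ directions simultaneously. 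This simultaneous control is the main obstacle I expect, and the exponent $(1-c)$, which the example of Wolf shows to be essentially sharp, should measure exactly how much cancellation the small part of the spectrum can be guaranteed to provide.
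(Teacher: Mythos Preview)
The paper does not prove Theorem~\ref{thm.subthm}: it is stated as background, attributed to Green (``Green effectively proves the following result in \cite{BJGFF}''), and no argument is supplied here. The paper's own contribution begins only with Theorem~\ref{thm.popdiffthm} and Lemma~\ref{lem.combinlem}. So there is nothing in this paper to compare your attempt against.

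As for the proposal itself, it is --- by your own description --- a plan that stalls at a named obstacle rather than a proof. The Chang-lemma step costs only $O_\alpha(1)$ dimensions, but on $V_0$ the surviving tail still carries $\ell^2$-mass $\alpha-\mu(W_0)$, which for generic $A$ is of order $\alpha$, and you correctly observe that a density bound on the exceptional set $E=\{x\in V_0:g(x)\le c\alpha^2\}$ cannot on its own produce a subspace of linear dimension avoiding $E$. This is a genuine gap, not a technicality: your moment estimate on the tail gives a per-point failure probability bounded away from $0$ independently of $n$, so the union bound over the $2^{\dim V}$ points of a candidate subspace is hopeless, exactly as you say.

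Among the escape routes you list at the end, the density-increment iteration along a chain of subspaces is the one that actually leads somewhere, and is the mechanism one expects behind the cited result. The point is not to locate $V$ in a single spectral pass but to iterate: either the non-trivial spectrum of $1_A$ relative to the current ambient subspace is small enough that $1_A\ast 1_A>c\alpha^2$ holds throughout and one stops, or a large Fourier coefficient yields a density increment on a hyperplane and one recurses; the number of dimensions lost before termination is what produces the exponent. Your write-up gestures at this possibility but does not carry it out, and carrying it out is the entire content of the argument.
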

Notice that the bound necessarily tends to $1$ as $c$ tends to $1$, but when $c$ is small there is no significant variation. Setting $c=0$ one has the following corollary and nothing stronger is known.
\begin{corollary}[{\cite[Theorem 9.3]{gre::9}}]\label{cor.triv}
Suppose that $G:=\F_2^n$ and $A \subset G$ has density $\alpha= \Omega(1)$. Then there is a subspace $V \subset A+A$ with
\begin{equation*}
|V| \geq  \exp(\Omega(n)).
\end{equation*}
\end{corollary}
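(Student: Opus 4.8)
The plan is to obtain the corollary as the endpoint $c = 0$ of Theorem \ref{thm.subthm}. First I would record the identification $D_0(A) = A + A$. Unwinding the definition, $x \in D_0(A)$ means $1_A \ast 1_A(x) > 0$; since $1_A \ast 1_A$ is non-negative, the set where it is strictly positive is exactly its support, and by the identity $A + B = \supp 1_A \ast 1_B$ noted above (taken with $B = A$) this support is precisely $A + A$. Hence $D_0(A) = A + A$.

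With this in hand the deduction is immediate: applying Theorem \ref{thm.subthm} with the parameter $c = 0$ produces a subspace $V \subset D_0(A) = A + A$ with $|V| \geq \exp(\Omega((1 - 0)n)) = \exp(\Omega(n))$, which is the claimed bound. Thus the corollary carries no content beyond the $c = 0$ specialization of the theorem, and I would present it exactly this way.

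Since the derivation is formal, it is worth recording where the genuine work sits, namely in Theorem \ref{thm.subthm} itself. I would expect that argument to proceed by harmonic analysis of $1_A \ast 1_A$ in the Bogolyubov spirit: on the Fourier side $\widehat{1_A \ast 1_A} = \widehat{1_A}^2$, so writing $1_A \ast 1_A(x) = \alpha^2 + \sum_{t \neq 0} \widehat{1_A}(t)^2 (-1)^{t \cdot x}$ and taking $V$ inside the annihilator of the large spectrum $\Lambda = \{t : |\widehat{1_A}(t)| \geq \rho\}$ keeps the main term near $\alpha^2$ while the small frequencies contribute a controlled error. Parseval bounds $|\Lambda|$ by $\alpha/\rho^2$, so such an annihilator has codimension $O(1)$ once $\rho$ is a constant.

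The main obstacle, and the reason the corollary is stated with the comparatively weak bound $\exp(\Omega(n))$ rather than $\exp(n - O(1))$, is that this clean estimate controls the four-fold sumset, where the error $\sum_{t \notin \Lambda} \widehat{1_A}(t)^4 \leq \rho^2 \alpha$ is dominated by the main term $\alpha^4$, but not the two-fold sumset $A + A$: there the error $\sum_{t \notin \Lambda} \widehat{1_A}(t)^2$ is only bounded by $\alpha$ via Parseval, which swamps the main term $\alpha^2$. Overcoming this requires a more delicate argument that loses a constant factor in the exponent, yielding a subspace of dimension only $\Omega(n)$ rather than $n - O(1)$. Whether this loss can be removed is precisely the question on which the example of Wolf cited in the abstract bears, and so I would not expect the bound $\exp(\Omega(n))$ to be improvable by these Fourier-analytic means alone.
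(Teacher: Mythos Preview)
Your deduction is exactly the one the paper gives: it simply notes that $D_0(A)=A+A$ and specializes Theorem \ref{thm.subthm} to $c=0$. The additional discussion of how Theorem \ref{thm.subthm} itself is proved is reasonable background but is not part of the corollary's proof in the paper.
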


In the other direction, adapting a construction from Ruzsa \cite{ruz::6} (see also \cite{ruz::4}), Green showed the following result.
\begin{theorem}[{\cite[Theorem 9.4]{gre::9}}]\label{thm.nivtriv}
Suppose that $G:=\F_2^n$. Then there is a set $A \subset G$ with density $\alpha=\Omega(1)$ such that if $V \subset A+A$ is a subspace then
\begin{equation*}
\P_G(V) \leq \exp(-\Omega(\sqrt{n})).
\end{equation*}
\end{theorem}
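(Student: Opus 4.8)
The plan is to take $A$ to be a Hamming ball centred below the middle level -- a niveau-type set -- and to exploit the dichotomy that the sumset of such a ball omits all vectors of very large Hamming weight, whereas a subspace of small codimension is forced to contain such a vector. Throughout, write $|x|$ for the number of nonzero coordinates of $x \in G$. Fix a constant $C>0$, set $k := \lfloor n/2 - C\sqrt n \rfloor$ and put
\[
A := \{x \in G : |x| \le k\}.
\]
First I would dispose of the density claim: the weight $|x|$ of a uniformly random $x$ is distributed as $\mathrm{Bin}(n,1/2)$, so by the central limit theorem (or a direct anticoncentration estimate) $\alpha = \P_G(A) \to \Phi(-2C)$, a positive absolute constant; hence $\alpha = \Omega(1)$.

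The second step is to compute $A+A$ exactly. Since Hamming weight is subadditive, $|x+y| \le |x|+|y|$, so every $z \in A+A$ satisfies $|z| \le 2k$; conversely, given any $z$ with $|z| \le 2k$ one splits its support as a disjoint union $S_1 \sqcup S_2$ with $|S_1|,|S_2| \le k$ and writes $z = 1_{S_1} + 1_{S_2}$ with both summands in $A$. Thus, up to the harmless rounding in $k$,
\[
A+A = \{z \in G : |z| \le 2k\} = \{z \in G : |z| \le n - 2C\sqrt n\},
\]
so the complement of $A+A$ is precisely the set of vectors of weight exceeding $n - 2C\sqrt n$.

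The heart of the matter is then a linear-algebra observation which I would isolate as a lemma: every subspace $V \subset G$ of codimension $r$ contains a vector of weight at least $n-r$. Indeed, write $V = \ker M$ for an $r \times n$ matrix $M$ over $\F_2$ of rank $r$, with columns $m_1,\dots,m_n \in \F_2^r$. Since the columns span $\F_2^r$, the total column sum $s = \sum_i m_i$ may be written as $s = \sum_{j \in J} m_j$ for some $J$ with $|J| \le r$; then $v := \mathbf{1} + 1_J$ satisfies $Mv = s - s = 0$, so $v \in V$, while $|v| = n - |J| \ge n - r$. Taking $c := 2C$, any subspace $V$ of codimension $r < c\sqrt n$ therefore contains a vector of weight $\ge n-r > n - 2C\sqrt n$, which lies outside $A+A$; hence no such $V$ is contained in $A+A$. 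Equivalently, every subspace $V \subset A+A$ has codimension at least $2C\sqrt n$, whence $\P_G(V) \le 2^{-2C\sqrt n} = \exp(-\Omega(\sqrt n))$, as required.

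The only genuinely delicate point is this lemma relating codimension to maximal weight; the evaluation of $A+A$ and the density estimate are routine. It is worth noting where the square root enters, and why it is the natural barrier here: the ball must be centred a distance $\Theta(\sqrt n)$ below $n/2$ in order to retain constant density, and this same $\Theta(\sqrt n)$ governs the weight deficit $n - 2k$ that $A+A$ leaves uncovered, hence exactly the codimension that can be blocked.
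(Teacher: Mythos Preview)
The paper does not actually prove this theorem; it is quoted as \cite[Theorem~9.4]{BJGFF} with the remark that Green obtained it by ``adapting a construction from Ruzsa''. Your argument is precisely that niveau-set construction (the label \texttt{thm.nivtriv} is a hint), and it is correct as written: the density estimate, the identification $A+A=\{z:|z|\le 2k\}$, and the lemma that a codimension-$r$ subspace meets weight $\ge n-r$ are all fine, and the rounding in $k$ only helps. So your proposal agrees with the intended (cited) proof; there is nothing further to compare within this paper.
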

Of course the gap between Corollary \ref{cor.triv} and Theorem \ref{thm.nivtriv} is very large so in a sense Problem \ref{pblm} remains wide open.

Since our only ways of proving that $A+A$ contains a subspace also show that $D_c(A)$ contains a subspace it is natural to ask about the limitations of such methods and, in particular, ask Problem \ref{pblm} with $D_c(A)$ in place of $A+A$.

It turns out that $D_c(A)$ need not contain a large subspace. Indeed, more than this, in the paper \cite{wol::0} Wolf was able to show that there are sets where not only does $D_c(A)$ not contain a large subspace, it doesn't even contain the sumset of a large set.\footnote{In actual fact the bulk of \cite{wol::0} concerns the arithmetic case $\Z/N\Z$; our purpose is expository and so our interest remains with $\F_2^n$.}
\begin{theorem}[{\cite[Theorem 2.6]{wol::0}}]\label{thm.wolf}
Suppose that $G:=\F_2^n$ and $c \in (0,1/2]$ is a parameter. Then there is a set $A=A(c) \subset G$ with density $\alpha=\Omega(1)$ such that if $A'+A' \subset D_c(A)$ then
\begin{equation*}
\P_G(A') \leq \exp(-\Omega(n/\log^2 c^{-1})).
\end{equation*}
\end{theorem}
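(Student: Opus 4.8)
The plan is to construct $A$ explicitly using the Fourier-analytic structure of $\F_2^n$, since $1_A \ast 1_A$ is governed by the characters of $G$. Recall that for $G = \F_2^n$ the characters are indexed by $r \in G$ via $\gamma_r(x) = (-1)^{r \cdot x}$, and one has the Fourier expansion $1_A \ast 1_A(x) = \sum_r |\wh{1_A}(r)|^2 \gamma_r(x)$. The aim is to choose $A$ so that the sign of $1_A \ast 1_A(x) - c\alpha^2$ encodes a ``Bohr set'' type condition: roughly, $x \in D_c(A)$ should correspond to $x$ lying in some structured but subspace-poor set $S$, for which membership of $A'+A'$ in $S$ forces $A'$ to be small. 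The natural candidate (adapting Ruzsa's construction as Green does) is to let $A$ be a level set determined by a small collection of characters, so that $1_A \ast 1_A$ is essentially a function of a few linear coordinates and its super-level set $D_c(A)$ is a ``generalized arithmetic progression''-like object that contains no large sumset.

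First I would fix a dimension parameter $d \approx \log^2 c^{-1}$ and split coordinates, writing $G = H \oplus K$ with $\dim H = d$. I would define $A$ so that its indicator depends on the $H$-component through a carefully chosen set $B \subset H$ and is ``spread out'' (density $\Omega(1)$, essentially a random or Hadamard-type set) in the $K$-component. The key computation is then to evaluate $1_A \ast 1_A(x)$: by independence across the direct sum it factors, so that $1_A \ast 1_A(x) = (1_B \ast 1_B)(x_H) \cdot (\text{something in } x_K)$, and one arranges the $K$-part to be concentrated near $0$. Then the condition $1_A\ast 1_A(x) > c\alpha^2$ becomes, for most $x$, a condition purely on $x_H$, namely $x_H$ lying in a popular difference set $D_c(B) \subset H$ inside the small space $H$. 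The heart of the matter is choosing $B \subset H$ so that $D_c(B)$ is a set in a $d$-dimensional space containing no large sumset; since $|H| = 2^d$ and $d$ is small, one can afford an exhaustive or probabilistic construction of $B$ there.

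Next I would translate ``$A'+A' \subset D_c(A)$'' into the statement ``$\pi_H(A') + \pi_H(A') \subset D_c(B)$'' where $\pi_H$ is the projection to $H$, modulo controlling the error from the $K$-direction (which is where the $\log^2$ loss plausibly enters, through a union bound or a Chang-type covering estimate). If $D_c(B)$ contains no sumset of a set of relative density exceeding $\exp(-\Omega(d))$ in $H$, then $\pi_H(A')$ is sparse in $H$; but $\pi_H(A')$ sparse in the $d$-dimensional space $H$ forces $\P_G(A')$ to be exponentially small in $n$, because each fibre contributes at most a full copy of $K$ and the number of admissible fibres is small. Matching the exponents gives the claimed bound $\P_G(A') \le \exp(-\Omega(n/\log^2 c^{-1}))$, with the $\log^2 c^{-1}$ arising precisely from the dependence $d \approx \log^2 c^{-1}$.

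The hard part will be the construction of $B \subset H$ together with the sharp analysis of $D_c(B)$: one needs a set in a space of dimension $d \approx \log^2 c^{-1}$ whose popular difference set at threshold $c$ admits no large sumset, and the reason the dimension must scale like $\log^2 c^{-1}$ (rather than, say, $\log c^{-1}$) is subtle and presumably comes from a second-moment or large-deviation estimate on $1_B \ast 1_B$ where the fluctuations of a sum of $d$ independent $\pm 1$ character values must drop below the threshold $c\alpha^2$ on a controlled fraction of $H$. I expect this threshold calibration --- pinning down how many characters are needed to push $1_A \ast 1_A$ below $c\alpha^2$ off a sumset-free set --- to be the genuine obstacle, whereas the projection and fibering arguments, while needing care with the $K$-direction error terms, are essentially routine once the one-dimensional-in-$H$ picture is set up.
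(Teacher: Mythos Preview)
The paper does not prove Theorem~\ref{thm.wolf}; it is quoted from \cite{JW}, and the only information the paper gives about the argument is the sentence ``The proof makes appealing use of measure concentration in $G$.'' So your proposal should be measured against that hint rather than against a proof in this paper.

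Against that hint, your plan goes in a genuinely different direction, and in fact contains a real gap. Your fibering step does not produce an exponent that grows with $n$. With $\dim H = d$, each fibre of the projection $\pi_H$ has size $2^{n-d}$, so any bound $|\pi_H(A')|\le m$ yields only $\P_G(A') \le m\,2^{-d}$. Even in the extreme case $m=1$ this is $2^{-d}$; with your choice $d\approx \log^2 c^{-1}$ you obtain at best $\P_G(A')\le \exp(-O(\log^2 c^{-1}))$, which has no dependence on $n$ at all, let alone $\exp(-\Omega(n/\log^2 c^{-1}))$. The sentence ``$\pi_H(A')$ sparse in the $d$-dimensional space $H$ forces $\P_G(A')$ to be exponentially small in $n$'' is simply false: sparsity in a quotient transfers to sparsity of the same order in $G$, not to anything stronger. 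Taking instead $d\approx n/\log^2 c^{-1}$ does not help either, since then you are asking for a set $B$ in a space of dimension $d$ whose popular difference set at threshold $c$ admits no sumset of relative density $\exp(-\Omega(d))$, which is exactly the theorem you are trying to prove, now in $H$.

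What actually works, and what ``measure concentration'' is pointing to, is a niveau-set construction that uses \emph{all} $n$ coordinates at once. One takes $A$ to be (a translate of) a Hamming half-space or a set defined by a Hamming-weight window; then $1_A\ast 1_A(x)$ is a function of the Hamming weight $|x|$, and a Chernoff/Hoeffding computation shows that $D_c(A)$ is contained in a Hamming ball of radius roughly $n/2 - k\sqrt{n}$ with $k$ depending on $c$. The conclusion then comes from a diameter/isoperimetry statement (Kleitman's theorem, or Harper's inequality) bounding the size of any $A'$ with $A'+A'$ inside such a ball, and this is where the $n$ in the exponent appears. The $\log^2 c^{-1}$ loss arises from matching the threshold $c$ to the radius parameter $k$ in that Gaussian-type tail, not from any Chang-type covering or character-counting as you suggest.
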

We remind the reader that we are interested in the case when $c$ is small but fixed; as $c$ tends to $0$ the upper bound must tend to $1$ since $A+A = D_0(A)$.

The proof makes appealing use of measure concentration in $G$. Indeed, it is a key insight here to consider the more general question of containing the sumset of a large set, not just a subspace, as this is much more suggestive of such tools.
\begin{problem}[Wolf, \cite{wol::0}]
Suppose that $A \subset G:=\F_2^n$ has density $\Omega(1)$ and $c \in [0,1]$ is a parameter. How large a set $A'$ with $A'+A' \subset D_c(A)$ can we guarantee may be found?
\end{problem}
Remarkably it turns out that it is quite easy to see that Wolf's theorem is close to best possible; we have the following complementary result.
\begin{theorem}\label{thm.popdiffthm}
Suppose that $G:=\F_2^n$, $A \subset G$ has density $\alpha= \Omega(1)$ and $c \in (0,1]$ is a parameter. Then there is a set $A'$ such that $A'+A' \subset D_c(A)$ and
\begin{equation*}
\P_G(A') \geq \exp(-O(n/\log c^{-1})).
\end{equation*}
\end{theorem}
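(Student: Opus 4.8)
The plan is to exploit the fact that the popular difference set is a metric ball. Writing $\varphi:=1_A\ast 1_A$, observe that $\varphi$ is positive definite: on $G=\F_2^n$ its Fourier transform is $\wh{\varphi}(t)=\wh{1_A}(t)^2\geq 0$, with $\sum_t\wh{1_A}(t)^2=\alpha$ and $\wh{1_A}(0)=\alpha$. Consequently the quantity
\[
\rho(z):=\|1_A-\tau_z1_A\|_{L^1(\P_G)}=2(\alpha-\varphi(z)),
\]
where $\tau_z$ denotes translation by $z$, is a translation-invariant pseudometric on $G$, since translation invariance of $\P_G$ gives $\rho(y+z)\leq\|\tau_{y+z}1_A-\tau_z1_A\|_{L^1}+\|\tau_z1_A-1_A\|_{L^1}=\rho(y)+\rho(z)$. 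In these terms $D_c(A)=\{z:\rho(z)<R\}$ with $R:=2\alpha(1-c\alpha)$ is exactly the $\rho$-ball of radius $R$ about $0$, and the requirement $A'+A'\subseteq D_c(A)$ says precisely that $A'$ has $\rho$-diameter less than $R$ (using $0\in D_c(A)$ for the diagonal terms $x+x$).

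So I would reduce the problem to finding a large set $A'$ of $\rho$-diameter $<R$. The first, lossy, move is the triangle inequality: any $A'$ contained in the ball $\{z:\rho(z)<R/2\}=\{z:\varphi(z)>\tfrac12\alpha(1+c\alpha)\}$ automatically works. This is clean but, for spread-out $A$ (where $\varphi\approx\alpha^2$ off the origin), the ball is essentially trivial, so it does not by itself give the theorem. The point is that $\rho$ can be very degenerate—nearly constant on $G\setminus\{0\}$—and then a diameter-$<R$ set can be vastly larger than any radius-$R/2$ ball. Hence the genuine task is to control all pairwise distances simultaneously, which is where measure concentration of $\rho$ must enter.

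The main line I would then pursue is a spectral split of $\varphi$. Fix a threshold $\eta$ and set $\Delta:=\{t\neq0:\wh{1_A}(t)^2\geq\eta\}$, so that $|\Delta|\leq\alpha/\eta$ by Parseval. Passing to (a coset of) the subspace $V$ annihilating $\Delta$ neutralises the large frequencies, which on $V$ contribute only a nonnegative constant to $\varphi$; the remaining part has all Fourier coefficients below $\eta$ and hence $L^2$-mass at most $\eta\alpha$. On $V$ I would estimate, by a second-moment (Chebyshev) or martingale concentration argument, the proportion of points where the small-frequency fluctuation drives $\varphi$ below $c\alpha^2$, and then pass to the subset of $V$ that avoids the corresponding bad differences. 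Optimising $\eta$ balances two competing effects: decreasing $\eta$ sharpens the concentration of the flat part but enlarges $\dim\langle\Delta\rangle$, and hence the codimension one pays. This trade-off is what should produce the exponent $O(n/\log c^{-1})$, interpolating between the two extreme regimes—subspace-like $A$, where a single coset of $\langle\Delta\rangle$ already suffices, and random-like $A$, where $\varphi$ is flat and almost all of $G$ is admissible.

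The hard part will be exactly this uniform, all-pairs control: bounding the small-frequency fluctuation on $V$ strongly enough to survive the comparison with the $2^{\dim V}$ pairwise sums. The naive obstacle is that restricting the spectrum to $V$ can collapse many small frequencies into a single coset of $\langle\Delta\rangle$, inflating the variance of $\varphi|_V$. Overcoming this—either by choosing $V$ with an additional random increment of dimension so as to spread the residual spectrum, or by replacing the crude union bound with the positive-definiteness estimate $\min_{v\in V}\varphi(v)\geq 2\,\E_{v\in V}\varphi(v)-\alpha$—is the crux, and extracting the sharp $\log c^{-1}$ factor, rather than a cruder $\exp(-O(n))$, is where the argument must be carried out with care.
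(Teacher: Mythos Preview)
Your proposal is a sketch rather than a proof, and the gap you yourself flag is real and unfilled. The pseudometric observation is correct but inert: as you note, the radius-$R/2$ ball is worthless for pseudorandom $A$. In the spectral route, the crucial quantitative claim---that optimising the threshold $\eta$ yields the exponent $O(n/\log c^{-1})$---is asserted, not shown, and I do not see how it would emerge. The concentration you invoke (Chebyshev or even a sub-Gaussian tail for $\varphi_{\mathrm{small}}$) is governed by the additive deviation $\delta=\alpha^2(1-c)$ from the mean $\alpha^2$; for small $c$ this is $\approx\alpha^2$, so the tail bound is essentially \emph{uniform in $c$} and cannot produce a $\log c^{-1}$ improvement. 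Passing to $V=\Delta^{\perp}$ costs codimension $\leq\alpha/\eta$, but nothing in the remaining estimate scales with $c$ in the required multiplicative way. The suggested fixes (random dimension increment, the inequality $\min_{v\in V}\varphi(v)\geq 2\E_{v\in V}\varphi(v)-\alpha$) do not address this; the latter only says $\min\varphi\geq 2\alpha_V^2-\alpha$ on a subspace, which is negative unless $A$ is already dense on $V$, and again carries no $c$-dependence.

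The paper's argument is entirely different and sidesteps Fourier analysis. It uses Gowers' dependent random choice: set $A'=\bigcap_{i=1}^r(X_i+A)$ for i.i.d.\ uniform $X_i$. For a fixed pair $(x,y)$ the probability that $(x,y)\in A'^2$ is $\varphi(x+y)^r$, so bad pairs (those with $\varphi(x+y)\leq c\alpha^2$) are suppressed by a factor $c^r$ relative to the total expectation $\geq\alpha^{2r}$. Choosing $r\approx(\log\sigma^{-1})/\log c^{-1}$ makes the bad fraction at most $\sigma$, and a short pigeonhole cleanup then extracts a subset with \emph{all} pairs good. The $\log c^{-1}$ in the exponent thus comes directly from the multiplicative amplification $c^r$, which is precisely the mechanism absent from your approach; the paper also remarks that Fourier methods would be expected to give worse (polynomial-in-$\alpha^{-1}$) dependence, consistent with the difficulty you encountered.
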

This result does not quite achieve the bounds of Theorem \ref{thm.wolf} and it seems of interest to try to close this gap.

Since we are looking for the considerably weaker structure of a sumset rather than a subspace we have some rather stronger tools available to us in the form of Gowers' \cite{gow::4} proof of the Balog-Szemer{\'e}di theorem \cite{balsze::}. We shall prove the following explicit version of Theorem \ref{thm.popdiffthm}
\begin{theorem}\label{thm.popdiffthmact}
Suppose that $G:=\F_2^n$, $A \subset G$ has density $\alpha>0$ and $c \in (0,1/2]$ is a parameter. Then there is a set $A'$ such that $A'+A' \subset D_c(A)$ and
\begin{equation*}
|A'| \geq \lfloor \alpha^32^{n(1-\log \alpha^{-1}/\log c^{-1})}/12\rfloor.
\end{equation*}
\end{theorem}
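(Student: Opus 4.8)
The plan is to recast the containment $A'+A' \subseteq D_c(A)$ as a statement about common neighbourhoods in a bipartite graph, and then to run the sampling argument (dependent random choice) that underlies Gowers' proof of the Balog-Szemeredi theorem. First I would rewrite the defining inequality of $D_c(A)$ in a symmetric form: since we work in $\F_2^n$, a short substitution shows that
\[
1_A \ast 1_A(x+y) = \P_G\bigl((A+x)\cap(A+y)\bigr),
\]
so $x+y \in D_c(A)$ says exactly that the translates $A+x$ and $A+y$ overlap in measure more than $c\alpha^2$. It therefore suffices to find a large set $A'$ of ``labels'' such that every pair $x,y \in A'$ satisfies $\P_G((A+x)\cap(A+y)) > c\alpha^2$; the diagonal contribution $x=y$ (giving $0_G$) is automatic, since $1_A \ast 1_A(0_G) = \alpha > c\alpha^2$ whenever $c \le 1/2$.

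Next I would introduce the bipartite graph on $G \times G$ joining a label $x$ to a point $w$ whenever $w \in A+x$ (equivalently $x+w \in A$). Here every label has precisely $\alpha|G|$ neighbours, and the number of common neighbours of $x,y$ is exactly $|(A+x)\cap(A+y)| = |G|\cdot 1_A \ast 1_A(x+y)$. To force many pairs of labels to have many common neighbours at once, I would pick points $w_1,\dots,w_t \in G$ independently and uniformly and set $A' := \bigcap_{i=1}^t (A+w_i)$. Then $\E|A'| = \alpha^t|G|$, while any pair $x,y$ with fewer than $c\alpha^2|G|$ common neighbours survives into $A' \times A'$ with probability at most $(c\alpha^2)^t$; hence the expected number of such ``bad'' pairs is at most $(c\alpha^2)^t|G|^2$.

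The heart of the argument is then a first-moment deletion step. Writing $B$ for the number of bad pairs, we have
\[
\E\bigl[\,|A'| - B\,\bigr] \ge \alpha^t|G| - (c\alpha^2)^t|G|^2,
\]
so some choice of $w_1,\dots,w_t$ makes $|A'|$ exceed $B$ by at least this amount. Deleting one label from each bad pair leaves a set, still called $A'$, with \emph{no} bad pairs and with
\[
|A'| \ge \alpha^t|G| - (c\alpha^2)^t|G|^2 = \alpha^t|G|\bigl(1 - (c\alpha)^t|G|\bigr),
\]
and by construction $A'+A' \subseteq D_c(A)$. It remains only to choose $t$: the clean choice is $t \approx \log|G|/\log c^{-1}$, which makes $c^t \approx |G|^{-1}$, so the correction term $(c\alpha)^t|G| = \alpha^t$ is negligible, the bracket is bounded below by a constant, and the main term is $\alpha^t|G| \approx |G|^{1 - \log\alpha^{-1}/\log c^{-1}}$ — exactly the exponent in the statement. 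The factors $\alpha^3$ and $1/12$ then come from rounding $t$ up to an integer and from pushing the bracket above a fixed constant, which I would treat as routine.

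The step I expect to be the main obstacle is the choice of $t$, because of a genuine tension. We need $t$ large enough that bad pairs become far rarer than the surviving set, so that deleting them costs essentially nothing and we really secure \emph{every} pairwise sum in $D_c(A)$ rather than merely most of them; this is the point where a ``most pairs good'' conclusion (a single random point, as in the plain common-neighbourhood lemma) is not enough and the extra samples are essential. At the same time $t$ must be small enough that $\alpha^t|G|$ does not collapse. Locating the balance point $c^t \approx |G|^{-1}$ is what simultaneously fixes the correct value of $t$ and produces the final exponent, and getting this trade-off right is the crux of the whole argument.
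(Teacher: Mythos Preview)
Your argument is correct and rests on the same random-intersection idea as the paper, but the cleanup step is genuinely different. The paper first isolates a \emph{combinatorial lemma}: with $r=\lceil\log 2\sigma^{-1}/\log c^{-1}\rceil$ random translates one obtains a set $A_0$ of density $\gtrsim\alpha^r$ in which a $(1-\sigma)$-fraction of pairs lie in $D_c(A)$; it then passes to a random subset $A_1\subset A_0$ of size about $\sigma^{-1}/4$ and uses a pigeonhole (the set $A_2$ of vertices with at least $(1-3\sigma)|A_1|$ good partners, where $3\sigma|A_1|<1$ forces \emph{all} partners to be good) to extract $A'$ with $A'+A'\subset D_c(A)$. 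You instead run the standard dependent-random-choice deletion in one shot: bound $\E|A'|$ against the expected number of bad pairs, fix a good sample, and delete one endpoint from each bad pair. Your route is shorter and yields slightly better constants (roughly $\alpha^2/2$ in place of $\alpha^3/12$ once $t$ is rounded so that $(c\alpha)^t|G|\le 1/2$); the paper's two-step version has the advantage of isolating the ``most pairs good'' statement as a stand-alone lemma. One small point worth tightening in your write-up: the bracket $1-(c\alpha)^t|G|$ equals $1-\alpha^t$ at your chosen $t$, which is not bounded away from zero for $\alpha$ close to $1$; bumping $t$ up by one (so $c^t\le |G|^{-1}/2$) fixes this at the cost of a single extra factor of $\alpha$.
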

Note that, as expected, Gowers' method provides rather good density dependence; using Fourier methods one might expect this to be exponential in $\alpha^{-O(1)}$ rather than in $\log \alpha^{-1}$.

The following result is an obvious generalization of the `combinatorial lemma' of Gowers \cite[Lemma 11]{gow::4}.
\begin{lemma}\label{lem.combinlem}
Suppose that $G:=\F_2^n$, $A \subset G$ has density $\alpha>0$ and $c,\sigma \in (0,1]$ are parameters. Then there is a set $A' \subset G$ with 
\begin{equation*}
\P_G(A') \geq \alpha^{\lceil \log 2\sigma^{-1}/\log c^{-1}\rceil}/\sqrt{2},
\end{equation*}
and
\begin{equation*}
\P_G^2(\{(x,y) \in A'^2: x+y \in D_c(A)\}) \geq (1-\sigma)\P_G^2(A'^2).
\end{equation*}
\end{lemma}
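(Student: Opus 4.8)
The plan is to prove Lemma \ref{lem.combinlem} by the method of dependent random choice, exploiting the fact that $1_A \ast 1_A(x+y)$ counts a common neighbourhood. Concretely, for $x,y \in G$ one has the identity
\[
\P_G(\{w : x+w \in A \text{ and } y+w \in A\}) = 1_A \ast 1_A(x+y),
\]
since translating the intersection $(A+x)\cap(A+y)$ by $x$ turns it into $\{v \in A : v+x+y \in A\}$, whose normalized size is $1_A\ast 1_A(x+y)$. Thus $x+y \in D_c(A)$ says exactly that $x$ and $y$ share more than a $c\alpha^2$-proportion of common ``neighbours'' $w$ with $x+w,y+w \in A$. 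This is precisely the configuration Gowers' combinatorial lemma is designed to handle.

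Accordingly I would set $k := \lceil \log 2\sigma^{-1}/\log c^{-1}\rceil$, draw $w_1,\dots,w_k \in G$ independently and uniformly, and form the random common neighbourhood
\[
A' := \bigcap_{i=1}^k (A+w_i) = \{x \in G : x+w_i \in A \text{ for all } i\}.
\]
Two computations in expectation over the $w_i$ then drive everything. First, since $\P_G(A+x)=\alpha$ for every $x$, independence gives $\P(x,y\in A') = (1_A\ast 1_A(x+y))^k$, so by the convexity of $t\mapsto t^k$ together with $\int 1_A\ast 1_A\, d\P_G = \alpha^2$,
\[
M := \E|A'|^2 = |G|\sum_{u}(1_A\ast 1_A(u))^k \geq |G|^2\alpha^{2k}.
\]
Second, writing $N$ for the number of pairs $(x,y)\in A'^2$ with $x+y \notin D_c(A)$, the bound $1_A\ast 1_A(u)\leq c\alpha^2$ off $D_c(A)$ yields $\E N \leq |G|^2(c\alpha^2)^k \leq c^k M$.

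To finish I would play the two estimates against each other on a single outcome rather than merely in expectation, which is the one genuinely delicate point. Considering the random variable $W := |A'|^2 - \sigma^{-1}N$, the two displays give $\E W \geq M(1-\sigma^{-1}c^k) \geq M/2 \geq |G|^2\alpha^{2k}/2$, where the middle inequality uses precisely $c^k \leq \sigma/2$ --- which is how the choice of $k$ enters. Hence some choice of $w_1,\dots,w_k$ achieves $W \geq |G|^2\alpha^{2k}/2$, and for that choice one reads off both conclusions at once: from $N\geq 0$ we get $|A'|^2 \geq |G|^2\alpha^{2k}/2$, hence $\P_G(A') \geq \alpha^k/\sqrt 2$, while $W\geq 0$ gives $N \leq \sigma|A'|^2$, hence $\P_G^2(\{(x,y)\in A'^2 : x+y\in D_c(A)\}) \geq (1-\sigma)\P_G^2(A'^2)$. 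I expect the main obstacle to be exactly this coupling: expectation alone delivers largeness and scarcity of bad pairs only separately, and the combined variable $W$ --- with its weight $\sigma^{-1}$ and the factor-of-two slack that manufactures the $\sqrt 2$ --- is what forces both onto the same instance, while the crude estimate $\E N \leq |G|^2(c\alpha^2)^k$ is what pins the exponent to the stated ceiling.
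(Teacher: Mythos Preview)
Your proof is correct and is essentially identical to the paper's own argument: both take random translates $A'=\bigcap_{i=1}^k(A+w_i)$, compute $\E|A'|^2\geq|G|^2\alpha^{2k}$ via convexity/H{\"o}lder and $\E N\leq |G|^2(c\alpha^2)^k$, then apply the first-moment method to the combined variable $|A'|^2-\sigma^{-1}N$ with $k=\lceil \log 2\sigma^{-1}/\log c^{-1}\rceil$ to extract both conclusions simultaneously. The only cosmetic differences are notation and that the paper phrases the lower bound via H{\"o}lder rather than Jensen.
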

\begin{proof}
Let $r$ be a natural parameter to be specified later and $X_1,\dots,X_r$ be independent, uniform, $G$-valued random variables. Put $A_i=X_i+A$ and $A'=\bigcap_{i=1}^r{A_i}$.

The probability that $(x,y) \in A_i^2$ is $1_A \ast 1_A(x+y)$, so the probability that $(x,y) \in A'^2$ is $1_A \ast 1_A(x+y)^r$. However, $\E_{x,y \in G}{1_A \ast 1_A(x+y)} = \alpha^2$ whence, by H{\"o}lder's inequality, we have that $\E\P_G(A')^2 =\E_{x,y \in G}{1_A \ast 1_A(x+y)^r} \geq \alpha^{2r}$.

Let $S$ be the set of pairs $(x,y) \in A'^2$ such that $x+y \not \in D_c(A)$. Then, as before, the probability that $(x,y) \in S$ is $1_A \ast 1_A(x+y)^r \leq c^r\alpha^{2r}$ and it follows that the expectation of $\P_G^2(A'^2) - \sigma^{-1}\P_G^2(S)$ is at least $\alpha^{2r}(1-\sigma^{-1}c^r)$. Letting $r=\lceil \log 2\sigma^{-1}/\log c^{-1}\rceil$ we conclude that there are some values of $X_1,\dots,X_r$ such that $\P_G^2(A'^2) - \sigma^{-1}\P_G^2(S) \geq \alpha^{2r}/2$. In particular
\begin{equation*}
\P_G^2(A'^2) - \sigma^{-1}\P_G^2(S)\geq 0 \textrm{ and }\P_G^2(A'^2)  \geq \alpha^{2r}/2,
\end{equation*}
and the result follows.
\end{proof}
We may now prove the theorem using the pigeon-hole principle.
\begin{proof}[Proof of Theorem \ref{thm.popdiffthmact}]
Let $\sigma$ be a parameter such that
\begin{equation}\label{eqn.restrict}
\left\lceil |G| \alpha^{\lceil \log 2\sigma^{-1}/\log c^{-1}\rceil}/\sqrt{2} \right\rceil \geq  \sigma^{-1}.
\end{equation}
Apply Lemma \ref{lem.combinlem} with $\sigma$ to get a set $A_0 \subset G$ with $|A_0| \geq  \sigma^{-1}$ and
\begin{equation}\label{eqn.key}
\P_G^2(\{(x,y) \in A_0^2: x+y \in D_c(A)\}) \geq (1-\sigma)\P_G^2(A_0^2).
\end{equation}
Let $A_1 \subset A_0$ be chosen uniformly at random from sets of size $\lfloor \sigma^{-1}/4 \rfloor$ (possible since $|A_0|$ is large enough). It follows that $\E|\{(x,y) \in A_1\times A_1: x+y \in D_c(A)\}|$ is equal to
\begin{equation*}
\E|\{(x,y) \in A_1\times A_1: x \neq y, x+y \in D_c(A)\}|+|A_1|.
\end{equation*}
The first term here is at least
\begin{equation*}
\frac{|A_1|(|A_1|-1)}{|A_0|(|A_0|-1)}((1-\sigma)|A_0|^2-|A_0|),
\end{equation*}
by (\ref{eqn.key}). Whence
\begin{equation*}
\E|\{(x,y) \in A_1\times A_1: x+y \in D_c(A)\}| \geq (1-2\sigma)|A_1|^2,
\end{equation*}
since $|A_0|^{-1} \leq \sigma$. We may thus pick a set $A_1$ such that this inequality is satisfied.

Now, let
\begin{equation*}
A_2:=\{x \in A_1: |\{ y \in A_1: x+y \in D_c(A)\}| \geq (1-3\sigma)|A_1|\}.
\end{equation*}
Note that since $3\sigma |A_1| < 1$, $x \in A_2$ implies that $x+A_1 \subset D_c(A)$. However, $A_2 \subset A_1$ whence $A_2+A_2 \subset D_c(A)$. It remains to note that
\begin{equation*}
|A_2||A_1| + (|A_1|-|A_2|)(1-3\sigma)|A_1| \geq (1-\sigma)|A_1|^2,
\end{equation*}
whence $|A_2| \geq |A_1|/2 \geq \lfloor \sigma^{-1}/4\rfloor /2$ and we have the conclusion on maximizing this with $\sigma$ subject to (\ref{eqn.restrict}).
\end{proof}

\bibliographystyle{alpha}

\bibliography{references}

\end{document}